\newtheorem{thm}{Theorem}[section]
\newtheorem{lem}[thm]{Lemma}
\newtheorem{cor}[thm]{Corollary}
\newtheorem{rem}[thm]{Remark}
\def\be#1\ee{\begin{equation}#1\end{equation}}
\newcommand{\bea}{\begin{eqnarray}}
\newcommand{\eea}{\end{eqnarray}}
\newcommand{\beaa}{\begin{eqnarray*}}
\newcommand{\eeaa}{\end{eqnarray*}}
\newcommand{\bei}{\begin{itemize}}
\newcommand{\eei}{\end{itemize}}
\newcommand{\bee}{\begin{enumerate}}
\newcommand{\eee}{\end{enumerate}}
\def\P{{\mathbb{P}}}
\def\R{\mathbb{R}}
\def\E{\mathbb{E}}
\def\N{{\mathbb N}}
\newcommand{\eps}{\varepsilon}
\def\s{{\sigma}}
\def\FF{{\mathcal F}}
\journal{Statistics and Probability Letters}
\begin{document}
\begin{frontmatter}
\title{Small deviations of the determinants of random matrices with Gaussian entries\tnoteref{label1}}
\tnotetext[label1]{This work was
supported by RFBR grants N 11-01-00285, 11-01-12006-ofi-m-2011}
\author{Nadezhda V. Volodko}
 \ead{nvolodko@gmail.com}
\address{Sobolev Institute of Mathematics
of RAS, 4 Acad. Koptyug avenue, 630090 Novosibirsk Russia}



\begin{abstract}
The probability of small deviations of the determinant of the matrix $AA^T$  is
estimated, where $A$ is an $n\times\infty$ random matrix with centered entries
having joint Gaussian distribution. The inequality obtained is sharp in a sense.
\end{abstract}

\begin{keyword}
random matrices \sep determinants \sep Gaussian sequences \sep small deviations


\end{keyword}

\end{frontmatter}




\section{Introduction and main results.}

We discuss the problem of estimating probabilities of small deviations for the 
determinants of random matrices of a special type. The need for the result of this 
kind emerged during obtaining the asymptotic expansion for the distributions of 
canonical $V$-statistics of the third order (Borisov and Volodko). 
Moreover, the problem itself seems to be of interest. Regarding the topic, there are 
papers 
devoted to the small deviations for the smallest singular values of random matrices 
(see Adamczak et al. (2012) and references therein). As for results for determinants, Li and Weil (2008) obtained the distribution of the determinant for 
the i.i.d. Gaussian case. In the present work, we consider more general 
object, but the result of Theorem 1.1 and Remark 1.2 below is not sharp for the case 
of i.i.d. Gaussian entries (see Remark 1.4). 

Let $((\tau_{ij})_{i,j=1}^\infty$ be an array of centered jointly Gaussian random variables such 
that
\be \label{condvar}
  \inf_{a_{ij}} \E\left(\tau_{kk}-\sum_{\min(i,j)<k} a_{ij}\tau_{ij}\right)^2 =1.
\ee

Let 
\[
  det_n:= det \left(\tau_{ij}\right)_{i,j=1}^n.
\]
The result is as follows.

\begin{thm} Under assumption  \eqref{condvar} for any $n\in \N$ and any $\eps>0$ we have
\[
   \P( |det_n|\le \eps)\le  \P\left( \prod_{j=1}^n |X_j|\le \eps\right)
\]
where $X_j$ are i.i.d. $N(0,1)$-distributed random variables. 
\end{thm}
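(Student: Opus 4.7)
The natural plan is induction on $n$, expanding $det_n$ along the last row and exploiting \eqref{condvar} to extract one fresh standard-normal degree of freedom at each step. The base case $n=1$ is immediate: for $k=1$ the index set $\{(i,j):\min(i,j)<1\}$ is empty, so \eqref{condvar} reduces to $\E\tau_{11}^2=1$, and $\tau_{11}$ itself is $N(0,1)$.

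For the inductive step, set $\FF_n:=\sigma(\tau_{ij}:\min(i,j)<n)$ and expand along the $n$-th row:
\[
  det_n \;=\; \sum_{j=1}^{n}(-1)^{n+j}\tau_{nj}M_{n,j} \;=\; \tau_{nn}\,det_{n-1}\;+\;R,
\]
where $M_{n,j}$ is the $(n,j)$-minor and $det_{n-1}$ is the top-left $(n-1)\times(n-1)$ minor. Every $\tau_{nj}$ with $j<n$, and every entry $\tau_{ij}$ appearing inside any minor $M_{n,j}$, satisfies $\min(i,j)<n$, so both $R$ and $det_{n-1}$ are $\FF_n$-measurable. By joint Gaussianity together with \eqref{condvar} applied at $k=n$, the conditional variance of $\tau_{nn}$ given $\FF_n$ equals $1$; hence $\tau_{nn}=m+\zeta$ with $m$ an $\FF_n$-measurable linear functional of the conditioning variables and $\zeta\sim N(0,1)$ independent of $\FF_n$. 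This gives the clean decomposition
\[
  det_n \;=\; \zeta\,det_{n-1}\;+\;S, \qquad S:=m\,det_{n-1}+R\in\FF_n.
\]

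The decisive step is the one-dimensional Gaussian shift bound: for fixed $a,b\in\R$, $\P(|\zeta a+b|\le\eps)\le\P(|\zeta a|\le\eps)$, since the law of $\zeta a$ has a symmetric density maximized at $0$. Applying this conditionally on $\FF_n$ and taking expectations yields
\[
  \P(|det_n|\le\eps)\;\le\;\P(|\zeta\,det_{n-1}|\le\eps)\;=\;\E_{\zeta}\bigl[\,\P(|det_{n-1}|\le\eps/|\zeta|)\,\bigr].
\]
Using the inductive hypothesis pointwise with the (random but $\zeta$-measurable) threshold $\eta=\eps/|\zeta|$, the last expectation is at most $\E_{\zeta}\bigl[\,\P(\prod_{j=1}^{n-1}|X_j|\le\eps/|\zeta|)\,\bigr]$, which equals $\P(\prod_{j=1}^{n}|X_j|\le\eps)$ after renaming $\zeta$ as an independent copy $X_n$ of $X_1$.

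The only real obstacle is bookkeeping the cofactor expansion to confirm that $R$ depends solely on $\{\tau_{ij}:\min(i,j)<n\}$, so that the conditional law of $\tau_{nn}$ has variance exactly $1$ and the decomposition $det_n=\zeta\,det_{n-1}+S$ is legitimate; everything else is the elementary Gaussian concentration inequality and a straightforward induction.
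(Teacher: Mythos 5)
Your proposal is correct and follows essentially the same route as the paper: the same cofactor decomposition $det_n=\tau_{nn}\,det_{n-1}+R$ with $R$ measurable with respect to $\FF_n$, the same extraction of a fresh $N(0,1)$ variable from $\tau_{nn}$ via \eqref{condvar}, the same conditional one-dimensional Anderson inequality, and the same two-stage conditioning (first on $\FF_n$, then on the new Gaussian) to invoke the inductive hypothesis. No substantive differences to report.
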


\begin{rem} Inequality becomes equality on diagonal matrices with independent entries. Moreover, for the fixed $n$ and $\varepsilon\rightarrow 0$,
\begin{equation}\label{product}
 \P\left( \prod_{j=1}^n |X_j|\le \eps\right)\sim\Big(\frac{2}
{\sqrt{2\pi}}\Big)^n\eps\frac{|\log\eps|^{n-1}}{(n-1)!}.
\end{equation}
\end{rem}

\begin{cor}
Let $A=\{\tau_{ij}\}$ be an $n\times\infty$ random matrix  with centered entries
having joint Gaussian distribution.
Suppose that for each $k\leq n$, 
$$d_k:=\inf_{\{\alpha_{ij}\}}\mathbb{E}\Big(\tau_{kk}-\sum_{\min(i,j)<k}\alpha_{ij}
\tau_{ij}\Big)^2>0.$$
Then, for the fixed $n$ and $\varepsilon=\varepsilon(n)>0$ small enough,
\begin{equation}\label{log}
{\bf P}(\sqrt{\det AA^T}<\varepsilon)\leq 
{\bf P}(\prod_{j=1}^n |X_j|\leq \varepsilon_0 )\sim \Big(\frac{2}
{\sqrt{2\pi}}\Big)^n\varepsilon_0
\frac{|\log (\varepsilon_0)|^k}{k!},
\end{equation}
where $X_j$ are i.i.d. $N(0,1)$-distributed random variables,
$$\varepsilon_0=\frac{\varepsilon}{\prod_{i=1}^{n}|d_i^{1/2}|}.$$
\end{cor}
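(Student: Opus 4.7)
The plan is to reduce the corollary to Theorem 1.1 in two moves: first drop the extra columns of $A$ via the Cauchy--Binet formula, then rescale the rows so that condition \eqref{condvar} is recovered.

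I would first apply Cauchy--Binet to the $n\times\infty$ matrix $A$,
\[
\det(AA^T)=\sum_{1\le j_1<\cdots<j_n}\bigl(\det A_{j_1,\ldots,j_n}\bigr)^2,
\]
where $A_{j_1,\ldots,j_n}$ is the $n\times n$ submatrix on the listed columns. Since every summand is nonnegative, retaining only the leading $n\times n$ block gives $\sqrt{\det(AA^T)}\ge|\det_n|$ with $\det_n:=\det(\tau_{ij})_{i,j=1}^n$, so
\[
\P\bigl(\sqrt{\det AA^T}<\varepsilon\bigr)\le\P(|\det_n|<\varepsilon),
\]
and the problem is reduced to bounding the right-hand side.

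Next I would normalize. Put $\tilde\tau_{ij}:=\tau_{ij}/\sqrt{d_i}$ for $i\le n$, extending the array by independent standard Gaussians for $i>n$ if that is needed to meet the framework of Theorem 1.1. The identity
\[
\tilde\tau_{kk}-\sum_{\min(i,j)<k}a_{ij}\tilde\tau_{ij}=\frac{1}{\sqrt{d_k}}\Bigl(\tau_{kk}-\sum_{\min(i,j)<k}a_{ij}\sqrt{d_k/d_i}\,\tau_{ij}\Bigr),
\]
combined with the observation that $(a_{ij})\mapsto(a_{ij}\sqrt{d_k/d_i})$ is a bijection of coefficient sequences, shows that $(\tilde\tau_{ij})$ satisfies \eqref{condvar}. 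Theorem 1.1 then yields, for every $\delta>0$,
\[
\P\bigl(|\det(\tilde\tau_{ij})_{i,j=1}^n|\le\delta\bigr)\le\P\Bigl(\prod_{j=1}^n|X_j|\le\delta\Bigr).
\]

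Since scaling row $i$ by $1/\sqrt{d_i}$ multiplies the determinant by the same factor, $|\det(\tilde\tau_{ij})_{i,j=1}^n|=|\det_n|/\prod_{i=1}^n\sqrt{d_i}$. Taking $\delta=\varepsilon_0=\varepsilon/\prod_{i=1}^n\sqrt{d_i}$ and chaining with the first step produces the inequality in \eqref{log}; the right-hand asymptotic is then immediate from \eqref{product} with $\varepsilon$ replaced by $\varepsilon_0$. The main point requiring care is the verification that the rescaled (and possibly augmented) array still satisfies \eqref{condvar} with respect to the \emph{same} collection of projection coefficients as in the original setup, which is precisely what the displayed identity settles; both the Cauchy--Binet step and the determinant-scaling observation are then routine.
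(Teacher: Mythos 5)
Your proof is correct, and the overall strategy coincides with the paper's: reduce $\sqrt{\det AA^T}$ to $|\det_n|$ of the leading $n\times n$ block, rescale the rows by $1/\sqrt{d_i}$ to restore condition \eqref{condvar}, and invoke Theorem 1.1 together with \eqref{product}. Where you genuinely diverge is in how the reduction to the leading block is obtained. The paper proves a column-monotonicity lemma by hand: adding one column $a$ to $A$ gives $\det BB^T=\det AA^T+a^T\,\{(-1)^{i+j}M_{ij}\}\,a\ge\det AA^T$, using multilinearity of the determinant and positive semidefiniteness of the adjugate of $AA^T$; iterating over columns yields $\det AA^T\ge(\det_n)^2$. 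You instead quote Cauchy--Binet, which delivers the same inequality in one line and in fact as an exact sum of squares over all $n\times n$ minors. Your route is shorter and more transparent; the paper's is self-contained and elementary. Both arguments share the same small gap for the $n\times\infty$ case (a limiting argument over finite column truncations, which is routine since the partial Gram matrices increase to $AA^T$ in the Loewner order). Two further points in your favour: you make explicit the normalization step --- including the bijection $a_{ij}\mapsto a_{ij}\sqrt{d_k/d_i}$ showing the rescaled array satisfies \eqref{condvar}, and the harmlessness of padding with independent Gaussian rows --- which the paper leaves entirely implicit; and your appeal to \eqref{product} yields the exponent $n-1$ and factor $(n-1)!$, quietly correcting the $k$ appearing in the paper's displayed asymptotic \eqref{log}, which is evidently a typo.
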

\begin{rem}
For Gaussian matrices with independent entries estimate $(\ref{log})$ is not
sharp. According {\rm Proposition 4.2} from {\rm \cite{WLi}}, if $M$ is a random matrix with i.i.d. 
standard complex Gaussian entries, then
\begin{equation}\label{LiWei}
\det MM*\sim\prod_{j=1}^n\chi_j^2.
\end{equation}
It is not difficult to show that the density in zero of the product on the right hand is bounded.
\end{rem}
\section{Proofs.}

\subsection{Proof of Theorem 1.1.} We use induction in $n$. Our main argument is a 
trivial one dimensional version
of Anderson inequality: for any centered Gaussian random variable $Y$,
 for any $r\in \R, \eps>0$ we have
\be \label{Anderson}
    \P(|Y+r|\le \eps)\le  \P(|Y|\le \eps ). 
\ee
Let us introduce a $\s$-algebra,
\[
   \FF_n:=\s\{ \tau_{ij},  \min(i,j)<k  \}.
\]
It follows from the definition of the determinant that
\[
   det_n =det_{n-1} \tau_{nn} + V_n,
\]
where $V_n$ is an  $\FF_n$-measurable random variable.
On the other hand, by \eqref{condvar} we can write
\[
  \tau_{nn} = X_n + W_n
\]
where $X_n$ is an $N(0,1)$-distributed random variable independent of  $\FF_n$
and $W_n:=\E(X_n|\FF_n)$ is an $\FF_n$-measurable (also normal) random variable. 
It follows that
\be \label{induction_det}
   det_n =det_{n-1} (X_n+W_n) + V_n := det_{n-1} X_n + V'_n,
\ee
where again $V'_n$ is an  $\FF_n$-measurable random variable. 

Now the induction goes as follows
\begin{eqnarray*}
   \P( |det_n|\le \eps) &=&  \E \ \P( |det_n|\le \eps|\FF_n)
 \\
 &=& \E \ \P( |det_{n-1} X_n + V'_n|\le \eps|\FF_n)  
 \\  
 &\le&  \E \ \P( |det_{n-1} X_n|\le \eps|\FF_n)
 \\
  &=&  \P( |det_{n-1} X_n|\le \eps )
\\
 &=& \E \ \P( |det_{n-1} X_n|\le \eps | X_n)
\\
 &=& \E \ \P \left( |det_{n-1}| \le \frac{\eps}{|X_n|}\  \big| X_n\right)
\\
 &\le& \E\ \P\left(   \prod_{j=1}^{n-1} |X_j| \le \frac{\eps}{|X_n|} \ \big| X_n\right)
\\
&\le& \E\ \P\left(   \prod_{j=1}^{n} |X_j| \le \eps  | X_n \right)
\\
&=& \P\left(   \prod_{j=1}^{n} |X_j| \le \eps\right),
\end{eqnarray*}
and we are done.

Here the first equality is the total probability formula for conditional probabilities or expectations.
The second equality comes from \eqref{induction_det}. In the third line we use that $X_n$ is independent of $\FF_n$ 
while  $V'_n, det_{n-1} $ are   $\FF_n$-measurable. Therefore, conditionnally on  $\FF_n$ the value $V'_n$ is the constant
while  $det_{n-1} X_n$ is a normal $N(det_{n-1}^2,0)$-distributed random variable
and we may apply to it inequality \eqref{Anderson}.
In the fourth line we return to unconditional probability by  the same total probability formula as in line one.
In the fifth line we re-condition again, this time with respect to $X_n$. The sixth line is trivial. In the seventh line we use that $det_{n-1}$
and  $X_n$ are independent, hence conditional distribution of $det_{n-1}$ is the same as the inconditional one.
Therefore, we may use inductional assumption. The remaining lines are trivial.
Theorem 1.1 is proved.

\subsection{Proof of Remark 1.2.}

$$\P\Big(|\prod_{j=1}^nX_j|<\varepsilon\Big)=\P\Big(\sum_{j=1}^n\log|X_j|<\log\varepsilon\Big).$$
Here $X_j$ are $N(0,1)$-distributed random variables.
Prove (\ref{product}) by induction in $n$. Denote 
$$S_n=\sum_{j=1}^n\log|X_j|.$$ Write down the density of $\log |X_1|$:
$$f_{\log|X_1|}(u)=\frac{2}{\sqrt{2\pi}}\exp\{-e^{2u}/2+u\}.$$
Below suppose that $t<0$ and $|t|$ is large enough.
\begin{equation}\label{int}
\P(S_n<t)=\int_{-\infty}^{\infty}\P(S_{n-1}<t-u)f_{\log|X_n|}(u)du.
\end{equation}
First, obtain the upper estimate of (\ref{int}).
$$\int_{-\infty}^{\infty}=\int_{-\infty}^{t+\log\log|t|}+
\int_{t+\log\log|t|}^{\log|t|}+\int_{\log|t|}^{\infty}.$$
$$\int_{-\infty}^{t+\log\log|t|}\leq
\frac{2}{\sqrt{2\pi}}\int_{-\infty}^{t+\log\log|t|}e^udu=
\frac{2}{\sqrt{2\pi}}e^t\log|t|=o\Big(e^t\frac{|t|^{n-1}}{(n-1)!}\Big).$$
Here $\P(S_{n-1}<t-u)$ and $\exp\{-e^{2u}/2\}$ are estimated by $1$.
$$\int_{t+\log\log|t|}^{\log|t|}\leq\Big(\frac{2}{\sqrt{2\pi}}\Big)^ne^t
\int_{t+\log\log|t|}^{\log|t|}\frac{(u-t)^{n-2}}{(n-2)!}(1+o(1))du$$ $$=
\Big(\frac{2}{\sqrt{2\pi}}\Big)^ne^t\frac{|t|^{n-1}}{(n-1)!}(1+o(1)).$$
In the second integral we used induction assumption.
$$\int_{\log|t|}^{\infty}\sim\Big(\frac{2}{\sqrt{2\pi}}\Big)^ne^t
\int_{\log|t|}^{\infty}\frac{(u-t)^{n-2}}{(n-2)!}\exp\{-e^{2u}/2\}du$$
$$=\Big(\frac{2}{\sqrt{2\pi}}\Big)^ne^t|t|^{n-2}
\int_{\log|t|}^{\infty}\frac{(u/|t|+1)^{n-2}}{(n-2)!}\exp
\Big\lbrace-\frac{1}{2}e^{2(u-\log|t|/2)+\log|t|}\Big\rbrace du$$
$$\leq\Big(\frac{2}{\sqrt{2\pi}}\Big)^ne^{3t/2}\frac{|t|^{n-2}}{(n-2)!}
\int_0^{\infty}(u+1)^{n-2}\exp\Big\lbrace-\frac{1}{2}e^{2u}\Big\rbrace 
du=o\Big(e^t\frac{|t|^{n-1}}{(n-1)!}\Big).$$
Here we used induction assumption and the trivial fact, that the product of
two numbers $\exp\{2u-\log|t|\}$ and $|t|$ is greater than their sum.

Then find the lower estimate of (\ref{int}):
$$\int_{-\infty}^{\infty}\P(S_{n-1}<t-u)f_{\log|X_n|}(u)du$$ $$\geq
\Big(\frac{2}{\sqrt{2\pi}}\Big)^ne^t\int_{t+\log|t|}^{-\log|t|}
\frac{(u-t)^{n-2}}{(n-2)!}(1+o(1))\exp\{-e^{2u}/2\}du$$
$$\geq\Big(\frac{2}{\sqrt{2\pi}}\Big)^ne^t\frac{|t|^{n-1}}{(n-1)!}(1+o(1)).$$
Remark 1.2 is proved.
\subsection{Proof of corollary 1.3.}
\begin{lem}
Let $A$ be an $n\times m$-matrix, $n\leq m$. Matrix $B$ is obtained by adding
a column $\{a_1,...,a_n\}^T$ on the right of $A$. Then
$$\det AA^T\leq\det BB^T.$$
\end{lem}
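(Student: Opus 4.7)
\medskip

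\noindent\textbf{Proof plan.} Let $a=(a_1,\dots,a_n)^T$ denote the newly appended column, so that $B=[\,A\mid a\,]$ and consequently
\[
  BB^T \;=\; AA^T + aa^T.
\]
Both $AA^T$ and $aa^T$ are positive semidefinite $n\times n$ matrices, and the goal reduces to showing $\det(AA^T)\le \det(AA^T+aa^T)$. The plan is to give two short routes and pick whichever reads best in context.

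\medskip

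\noindent The cleanest route is via the Cauchy--Binet formula. For an $n\times k$ matrix $C$ with $n\le k$,
\[
  \det(CC^T) \;=\; \sum_{S} \det(C_S)^2,
\]
where $S$ ranges over all $n$-element subsets of the columns of $C$ and $C_S$ is the corresponding $n\times n$ minor. Applying this to $A$ (index set $\{1,\dots,m\}$) and to $B$ (index set $\{1,\dots,m+1\}$), every $n$-subset of $\{1,\dots,m\}$ is also an $n$-subset of $\{1,\dots,m+1\}$, and for such $S$ the minors of $A$ and $B$ agree. Hence
\[
  \det(BB^T)
  \;=\; \sum_{S\subset\{1,\dots,m+1\}} \!\!\det(B_S)^2
  \;\ge\; \sum_{S\subset\{1,\dots,m\}} \!\!\det(A_S)^2
  \;=\; \det(AA^T),
\]
since we have only added non-negative terms. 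This finishes the proof.

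\medskip

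\noindent As an alternative, if $AA^T$ is non-singular one can appeal to the matrix determinant lemma, $\det(AA^T+aa^T)=\det(AA^T)\bigl(1+a^T(AA^T)^{-1}a\bigr)$, where the scalar in parentheses is $\ge 1$ because $(AA^T)^{-1}$ is positive definite; the singular case is immediate since then $\det(AA^T)=0\le \det(BB^T)$. I expect no real obstacle here: both arguments are a few lines, and the Cauchy--Binet version has the advantage of avoiding the case split and making transparent the geometric meaning (adding a column can only increase the Gram determinant, i.e.\ the squared volume).
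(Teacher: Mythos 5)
Your argument is correct, and your primary route (Cauchy--Binet) is genuinely different from the paper's. The paper proves the lemma by expanding $\det BB^T$ through multilinearity of the determinant in the rows, arriving at
\[
\det BB^T=\det AA^T+\sum_{i,j\le n}(-1)^{i+j}a_ia_jM_{ij}
=\det AA^T+a^T\bigl(\det(AA^T)\,(AA^T)^{-1}\bigr)a,
\]
and then observing that the adjugate of the positive definite matrix $AA^T$ is positive semidefinite, so the correction term is nonnegative; the rank-deficient case is handled separately since then $\det AA^T=0$. This is precisely your second, ``alternative'' route --- the matrix determinant lemma written in adjugate form --- including the same case split. Your Cauchy--Binet argument buys something real: it avoids the case split entirely, needs no inverse or adjugate, and makes the monotonicity transparent as ``adding a column adds nonnegative squared-minor terms to the Gram determinant.'' The paper's expansion, on the other hand, is self-contained at the level of elementary determinant manipulations and identifies exactly what the increment $\det BB^T-\det AA^T$ is (the quadratic form of $a$ against the adjugate), which the Cauchy--Binet sum over the new subsets also encodes but less explicitly. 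Either write-up is acceptable; if you keep the Cauchy--Binet version as the main proof, state the formula $\det(CC^T)=\sum_S\det(C_S)^2$ with the convention that $S$ runs over all $\binom{m}{n}$ column subsets, since that identity is the entire content of the argument.
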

\begin{proof}
If the rows of matrix $A$ are linearly dependent then $\det AA^T=0\leq\det BB^T.$ Suppose that the rows are linearly independent.

Denote the rows of matrix $A$ by $A_1,...,A_n$. Then
$$AA^T=\{\langle A_i,A_j\rangle\}_{i,j\leq n};\ \ BB^T=\{\langle A_i,A_j\rangle +
a_ia_j\}_{i,j\leq n}.$$
\begin{eqnarray*}
\det BB^T &=&\begin{vmatrix}
\langle A_1,A_1\rangle & ... & \langle A_1,A_n \rangle  \\  
\langle A_1,A_2\rangle+a_1a_2 & ... & \langle A_2,A_n\rangle+a_2a_n \\  
... & ... & ... \\  
\langle A_1,A_n\rangle+a_1a_n & ... & \langle A_n,A_n\rangle+a_n^2 \\ 
\end{vmatrix}\\ &+&
\begin{vmatrix}
a_1^2 & ... & a_1a_n  \\  
\langle A_1,A_2\rangle+a_1a_2 & ... & \langle A_2,A_n\rangle+a_2a_n \\  
... & ... & ... \\  
\langle A_1,A_n\rangle+a_1a_n & ... & \langle A_n,A_n\rangle+a_n^2 \\ 
\end{vmatrix}
\\ &=& \begin{vmatrix}
\langle A_1,A_1\rangle & ... & \langle A_1,A_n \rangle  \\  
\langle A_1,A_2\rangle+a_1a_2 & ... & \langle A_2,A_n\rangle+a_2a_n \\  
... & ... & ... \\  
\langle A_1,A_n\rangle+a_1a_n & ... & \langle A_n,A_n\rangle+a_n^2 \\ 
\end{vmatrix}\\ &+&
\begin{vmatrix}
a_1^2 & ... & a_1a_n  \\  
\langle A_1,A_2\rangle & ... & \langle A_2,A_n\rangle \\  
... & ... & ... \\  
\langle A_1,A_n\rangle & ... & \langle A_n,A_n\rangle \\ 
\end{vmatrix} 
\\ &=& \det AA^T + \begin{vmatrix}
a_1^2 & ... & a_1a_n  \\  
\langle A_1,A_2\rangle & ... & \langle A_2,A_n\rangle \\  
... & ... & ... \\  
\langle A_1,A_n\rangle & ... & \langle A_n,A_n\rangle \\ 
\end{vmatrix}\\ &+&
\begin{vmatrix}
\langle A_1,A_1\rangle & ... & \langle A_1,A_n \rangle  \\  
a_1a_2 & ... & a_2a_n \\  
... & ... & ... \\  
\langle A_1,A_n\rangle & ... & \langle A_n,A_n\rangle \\ 
\end{vmatrix}+...+
\begin{vmatrix}
\langle A_1,A_1\rangle & ... & \langle A_1,A_n \rangle  \\  
\langle A_1,A_2\rangle & ... & \langle A_2,A_n \rangle \\  
... & ... & ... \\  
a_1a_n & ... & a_n^2 \\ 
\end{vmatrix}\\ &=& \det AA^T+\sum_{i,j\leq n}(-1)^{i+j}a_ia_jM_{ij},
\end{eqnarray*}
where $M_{ij}$ is a complementary minor to the element of matrix $AA^T$ with
coordinates $(i,j)$.
The last equality comes from the row expansions of $n$ determinants. 
Then, the matrix of algebraic complements $\{(-1)^{i+j}M_{ij}\}$ of the
positively definite matrix $AA^T$ is also positively definite, because
$$\{(-1)^{i+j}M_{ij}\}=\det AA^T(AA^T)^{-1},$$
where $(AA^T)^{-1}$ is an inverse matrix of $AA^T$.
Then,
$$\sum_{i,j\leq n}
(-1)^{i+j}a_ia_jM_{ij}=
\{a_1,...,a_n\}\times\{(-1)^{i+j}M_{ij}\}\times\{a_1,...,a_n\}^T\geq 0.$$
This fact finishes the proof of Corollary 1.3.
\end{proof}
{\bf Proof} of Remark 1.4 is analogous to the proof of Remark~1.2.

{\bf Acknowledgements.} The author is grateful to M.~A.~Lifshits for useful 
discussions.


\begin{thebibliography}{99}
\bibitem{Adam} Adamczak, R., Guedon, O., Litvak, A. E., Pajor, A., Tomczak-Jaegermann, N., 2012. Condition number of a square matrix with i.i.d. columns drawn 
from a convex body. Proc. Amer. Math. Soc. 140, 987--998.
\bibitem{WLi} Li, Wenbo V., Wei1, Ang, 2009. Gaussian integrals involving absolute
value functions. {\it IMS Collections},
High Dimensional Probability V: The Luminy Volume, 43--59.
\end{thebibliography}
\end{document}